\def\odd{\mathrm{odd}}
\newtheorem{thm}{Theorem}
\newtheorem{theorem}[thm]{Theorem}
\newtheorem{lemma}[thm]{Lemma}
\theoremstyle{definition}
\newtheorem{example}[thm]{Example}
\newtheorem*{remark}{Remark}
\title{Simple greedy $2$-approximation algorithm for the maximum genus of a graph} 
\author{Michal Kotrb\v{c}\'ik\thanks{Department of Computer Science, Faculty of Informatics, Masaryk University, Botanick\'a 68a, 602 00 Brno, Czech Republic (kotrbcik@fi.muni.cz)
}
\and Martin \v{S}koviera\thanks{Department of Computer Science Faculty of Mathematics, Physics and Informatics, Comenius University, Mlynsk\'a dolina,  842 48 Bratislava, Slovakia 
(skoviera@dcs.fmph.uniba.sk)}
}
\begin{document}

\maketitle

\begin{abstract}
The maximum genus $\gamma_M(G)$ of a graph $G$ is the largest
genus of an orientable surface into which $G$ has a cellular
embedding. Combinatorially, it coincides with the maximum
number of disjoint pairs of adjacent edges of $G$ whose removal
results in a connected spanning subgraph of~$G$. In this paper
we prove that removing pairs of adjacent edges from $G$
arbitrarily while retaining connectedness leads to at least
$\gamma_M(G)/2$ pairs of edges removed. This allows us to
describe a greedy algorithm for the maximum genus of a graph;
our algorithm returns an integer $k$ such that
$\gamma_M(G)/2\le k \le \gamma_M(G)$, providing a simple method
to efficiently approximate maximum genus. As a consequence
of our approach we obtain a $2$-approximate counterpart of
Xuong's combinatorial characterisation of maximum genus.
\end{abstract}

{\bf Keywords:} maximum genus, embedding, graph, greedy algorithm.

{\bf AMS subject classification.} 
Primary: 05C10. Secondary: 05C85, 05C40.

\section{Introduction}
The \textit{maximum genus} $\gamma_M(G)$ of a graph $G$ is the
maximum integer $g$ such that $G$ has a cellular embedding in
the orientable surface of genus $g$. A result of Duke
\cite{duke} implies that a graph $G$ has a cellular embedding
in the orientable surface of genus $g$ if and only if
$\gamma(G) \le g \le \gamma_M(G)$ where $\gamma(G)$ denotes the
(minimum) genus of $G$. The problem of determining the set of
genera of orientable surfaces upon which $G$ can be embedded
thus reduces to calculation of $\gamma(G)$ and $\gamma_M(G)$.

Computing the minimum genus of a graph is a notoriously
difficult problem, which is known to be NP-complete  even for
cubic graphs (see \cite{Th,T2}). Nevertheless, the minimum
genus can be calculated in linear time for graphs with bounded
genus or bounded treewidth by \cite{KMR:2008}. Moreover, for
graphs with fixed treewidth and bounded maximum degree
\cite{gross:2014} provides a polynomial-time algorithm
obtaining the complete genus distribution $\{g_i\}$ of
the graph $G$, where $g_i$ denotes the number of cellular
embeddings of $G$ into the orientable surface of genus $i$. For
graphs of bounded maximum degree \cite{CS:2013} has recently
proposed a polynomial-time algorithm constructing an embedding
with genus at most $O(\gamma(G)^{c_1}\log^{c_2} n)$ where $c_1$
and $c_2$ are constants. On the other hand, for every
$\epsilon>0$ and every function $f(n) = O(n^{1-\epsilon})$
there is no polynomial-time algorithm that constructs an
embedding of any graph $G$ with $n$ vertices into the surface
of genus at most $\gamma(G) + f(n)$ unless P $=$ NP (see
\cite{CKK,CKK2}).

For maximum genus, the situation is quite different, as maximum
genus admits a good (min-max) characterisation by Xuong's and
Nebesk\'y's theorems, see \cite{KOK,X} and \cite{KG,N},
respectively. From among these results the best known is
Xuong's theorem stating that $\gamma_M(G) = (\beta(G) - \min_T
\odd(G-E(T)))/2$, where $\beta(G)$ is the cycle rank of $G$,
$\odd(G-E(T))$ is the number of components of $G-E(T)$ with an
odd number of edges, and the minimum is taken over all spanning
trees $T$ of $G$. Building on these results, Furst
et~al.~\cite{FGM} and Glukhov \cite{G} independently devised
polynomial-time algorithms for determining the maximum genus of
an arbitrary graph. The algorithm of \cite{FGM} uses Xuong's
characterisation of maximum genus and exploits a reduction to
the linear matroid parity on an auxiliary graph; its running
time is bounded by $O(mn\Delta\log^6m)$, where $n,m$, and
$\Delta$ are the number of vertices, edges, and the maximum
degree of the graph, respectively. A matroidal structure is
also in the backgroung of the algorithm derived in~\cite{G},
albeit in a different way. Starting with any spanning tree $T$
of $G$, the algorithm  greedily finds a sequence of graphs
$F_i$ such that $T = F_0 \subseteq \cdots \subseteq F_n
\subseteq G$, $|E(F_{i+1}) - E(F_i)| = 2$, and $\gamma_M(F_i) =
i$ for all $i$, and $\gamma_M(F_n) = \gamma_M(G)$. The running
time of this algorithm is bounded by $O(m^6)$.

Although two polynomial-time algorithms for the maximum genus
problem are known, both are relatively complicated. It is
therefore desirable to have a simpler way to determine the
maximum genus, at least approximately. A greedy approximation
algorithm for the maximum genus of a graph was proposed by Chen
\cite{chen}. The algorithm has two main phases. First, it
modifies a given graph $G$ into a $3$-regular graph $H$ by
vertex splitting, chooses an arbitrary spanning tree $T$ of
$H$, and finds a  set $P$ of disjoint pairs  of adjacent edges
in $H-E(T)$ with the maximum possible size. Second, it
constructs a single-face embedding of $T\cup P$  and then
inserts the remaining edges into the embedding while trying to
raise the genus  as much as possible. A high-genus embedding of
$G$ in the same surface is then constructed by contracting the
edges created by vertex splitting. The algorithm constructs an
embedding of $G$ with genus at least $\gamma_M(G)/4$ and its
running time $O(m\log n)$ is dominated by the second phase,
that is, by operations on an embedded spanning subgraph of $H$.

In this paper we show that there is a much simpler way to
approximate maximum genus. Our algorithm repeatedly removes
arbitrary pairs of adjacent edges from $G$ while keeping the
graph connected. We prove that this simple idea leads to at
least $\gamma_M(G)/2$ pairs removed, providing an algorithm
that returns an integer $k$ such that $\gamma_M(G)/2 \le k \le
\gamma_M(G)$. This process can be implemented with running time
$O(m^2\log^2n/(n\log\log n))$. The algorithms developed in
\cite{chen} can then be used to efficiently construct an
embedding with genus $k$. Our result provides the first
method to approximate maximum genus that can be easily
implemented and improves the previous more complicated
algorithm of Chen~\cite{chen}, which can guarantee
embedding with genus only $\gamma_M(G)/4$.  Structurally,
our approach yields a natural $2$-approximate counterpart of
Xuong's theorem.

\section{Background}

In this section we present definitions and results that provide
the background for our algorithm.

Our terminology is standard and consistent with \cite{MT}.
By a graph we mean a finite undirected graph with loops and
parallel edges permitted. Throughout, all embeddings into
surfaces are cellular, forcing our graphs to be connected,
and the surfaces are orientable. For more details and the
necessary background  we refer the reader to \cite{GT}  or
\cite{MT}; a recent survey of maximum genus can be found in
\cite[Chapter~2]{topics}.

One of the earliest results on embeddings of graphs is the
following observation, which is sometimes called Ringeisen's
edge-addition lemma. Although it is implicit in \cite{NSW},
Ringeisen \cite{ringeisen:1972} was perhaps the first to draw
an explicit attention to it.

\begin{lemma}\label{lemma:edge-addition-technique}
Let $\Pi$ be an embedding of a connected graph $G$ and let $e$
be an edge not contained in $G$, but incident with vertices in
$G$
\begin{itemize}
\item[{\rm (i)}] If both ends of $e$ are inserted into the
    same face of $\Pi$, then this face splits into two
    faces of the extended embedding of $G+e$ and the genus
    does not change.
\item[{\rm (ii)}] If the ends of $e$ are inserted into two
    distinct faces of $\Pi$, then in the extended embedding
    of $G+e$ these faces are merged into one and the genus
    raises by one.
\end{itemize}
\end{lemma}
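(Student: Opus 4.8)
The plan is to reduce both parts to Euler's formula by tracking how the facial walks of $\Pi$ change when $e$ is inserted. Recall that a cellular embedding of a connected graph is encoded by a rotation system, that its faces are exactly the closed walks produced by the standard face-tracing procedure, each of them bounding an open disk, and that $|V(G)| - |E(G)| + |F(\Pi)| = 2 - 2g$, where $g$ is the genus of the supporting surface. Inserting the two ends of $e$ into prescribed corners of $\Pi$ merely extends the rotation system, so $G + e$ is again cellularly embedded, being connected because $G$ is. Since passing from $G$ to $G + e$ leaves $|V|$ unchanged and increases $|E|$ by one, it suffices to determine the change in the number of faces.

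For part (i), let $f$ be the face receiving both ends of $e$, with facial walk $W$, and let $c_1$ and $c_2$ be the corners of $W$ into which the ends of $e$ are inserted. These two corners split $W$ into two arcs $W_1$ and $W_2$, and re-running the face-tracing on $G + e$ replaces $f$ by exactly two faces, whose boundary walks are $W_1$ and $W_2$, each closed up through $e$; every other face is untouched. Hence $|F|$ increases by exactly one, and Euler's formula gives $|V| - (|E| + 1) + (|F| + 1) = |V| - |E| + |F| = 2 - 2g$, so the genus does not change.

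For part (ii), the ends of $e$ are inserted into corners $c_1$ and $c_2$ of two distinct faces $f_1$ and $f_2$, with facial walks $W_1$ and $W_2$. Re-tracing now splices $W_1$ and $W_2$ into a single closed walk, obtained by following $W_1$, then $e$, then $W_2$, then $e$ again; thus $f_1$ and $f_2$ are replaced by one face and $|F|$ decreases by exactly one. Euler's formula becomes $|V| - (|E| + 1) + (|F| - 1) = (2 - 2g) - 2 = 2 - 2(g + 1)$, so the genus increases by one. Equivalently, one may argue topologically: in (i) the edge $e$ is an arc drawn inside the open disk $f$, cutting it into two disks and leaving the surface unchanged; in (ii) one excises a small open disk from each of $f_1$ and $f_2$ and glues in a tube joining the resulting boundary circles, which adds a handle and so raises the genus by one, after which $e$ runs along the tube and the leftover annular region closes up into a single disk.

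The only step that demands genuine care is the face-tracing bookkeeping behind the two claims above --- that in (i) one face splits into exactly two and in (ii) two faces merge into exactly one --- together with a check of the degenerate configurations, such as $e$ being a loop or both of its ends being inserted into the same corner. All of this is routine once the rotation-system formalism has been fixed, so I do not anticipate a real obstacle; the combinatorial content of the lemma is essentially the elementary fact that composing a permutation with a transposition either splits one cycle into two or merges two cycles into one.
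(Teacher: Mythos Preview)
Your argument is correct and is the standard one: track the face-tracing cycles under insertion of $e$ and read off the genus change from Euler's formula. The permutation remark at the end is exactly the right way to handle the degenerate cases uniformly.

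Note, however, that the paper does not actually prove this lemma. It is stated as background (``Ringeisen's edge-addition lemma'') and attributed to \cite{NSW} and \cite{ringeisen:1972}, so there is no proof in the paper to compare yours against. Your write-up is a perfectly good self-contained proof of the cited result.
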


The next lemma, independently obtained in \cite{KOK},
\cite{jungerman:1978}, and \cite{X}, constitutes the
cornerstone of proofs of Xuong's theorem. It follows easily
from Lemma~\ref{lemma:edge-addition-technique}.

\begin{lemma}\label{lemma:adding-pairs-single-face}
Let $G$ be a connected graph and $\{e,f\}$ a pair of adjacent
edges not contained in $G$, but incident with vertices in $G$.
If $G$ has an embedding with a single face, then so does
$G\cup\{e,f\}$.
\end{lemma}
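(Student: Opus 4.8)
The plan is to deduce the statement directly from Ringeisen's edge-addition lemma (Lemma~\ref{lemma:edge-addition-technique}) by inserting the two edges $e$ and $f$ one at a time. Write $e = uv$ and $f = vw$, so that $v$ is the common endpoint; here $u, v, w$ are vertices of $G$ (not necessarily distinct, but that will not matter). Assume $G$ has an embedding $\Pi$ with a single face $F$.

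First I would insert $e = uv$ into the embedding $\Pi$. Since $\Pi$ has only one face $F$, both ends $u$ and $v$ of $e$ necessarily lie on the boundary of $F$ (every vertex of a cellularly embedded connected graph lies on the boundary of every face it meets, and here there is only one face). Hence we are in case~(i) of Lemma~\ref{lemma:edge-addition-technique}: the face $F$ splits into two faces $F_1$ and $F_2$ of the extended embedding $\Pi'$ of $G + e = G \cup \{e\}$, and the genus is unchanged. The key point to extract from this step is that the common vertex $v$ lies on the boundary of \emph{both} $F_1$ and $F_2$ — indeed $v$ is an endpoint of the newly inserted edge $e$, and when a face is split along a path through $e$, every vertex of that path (in particular $v$) ends up on the boundary of each of the two resulting faces.

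Next I would insert $f = vw$ into $\Pi'$. Its endpoint $w$ is a vertex of $G$, hence lies on the boundary of the single face $F$ of $\Pi$, so after the split $w$ lies on the boundary of $F_1$ or $F_2$ (or both); say $w$ lies on $\partial F_j$. By the previous paragraph, $v$ lies on $\partial F_1 \cap \partial F_2$, so in particular $v \in \partial F_j$ as well. Thus both ends of $f$ can be inserted into the same face $F_j$, and we are again in case~(i) of Lemma~\ref{lemma:edge-addition-technique}: $F_j$ splits into two faces and the genus does not change. This leaves us with $(2 - 1) + 2 = 3$ faces on $G \cup \{e, f\}$ while the genus is unchanged.

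Finally, to conclude that $G \cup \{e,f\}$ has a \emph{single}-face embedding, I would invoke the Euler formula rather than try to merge faces directly. We have $|E(G \cup \{e,f\})| = |E(G)| + 2$ and the same vertex set, while the genus of the embedding just constructed equals that of $\Pi$; so the Euler-characteristic count forces the number of faces to be exactly $1 - 2 + (\text{number of faces of }\Pi) + 2 \cdot 0$... rather than rely on arithmetic that could slip, the cleanest version is: each of the two edge-insertions was of type~(i), so neither changed the genus, and $G \cup \{e,f\}$ has $2$ more edges than $G$; by Euler's formula with genus fixed, adding $2$ edges to a cellular embedding without changing the genus forces the face count to increase by exactly $2$, contradicting the count of $3$ obtained above unless $\Pi$ itself had... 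The honest resolution is that I have freedom in \emph{where} to insert the second edge. I expect the only real subtlety — and the main thing to get right — is the choice made when inserting $f$: one must insert $f$ so that its two ends go into \emph{different} faces among $F_1, F_2$, using that $v$ is on the boundary of both and $w$ is on the boundary of at least one of them. Then the second insertion is of type~(ii): $F_1$ and $F_2$ merge back into one face and the genus rises by one, yielding an embedding of $G \cup \{e,f\}$ with a single face (and genus one more than $\Pi$). This is the version I would actually write up; the earlier attempt shows why the routing of $f$, not the mechanics of Lemma~\ref{lemma:edge-addition-technique}, is the only point requiring care.
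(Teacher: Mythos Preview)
Your final version---insert $e$ into the single face (case~(i) of Lemma~\ref{lemma:edge-addition-technique}) to split it into $F_1,F_2$, observe that the shared endpoint $v$ lies on both boundaries, and then route $f$ with its $v$-end in the face \emph{not} containing $w$ so that case~(ii) merges $F_1,F_2$ back into one face---is exactly the argument the paper has in mind when it says the lemma ``follows easily from Lemma~\ref{lemma:edge-addition-technique}.'' The detour through applying case~(i) twice and counting faces via Euler's formula is a dead end (as you noticed), so in the write-up simply go straight to the correct routing of $f$.
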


Recall that by Xuong's theorem $\gamma_M(G) = (\beta(G) -
\min_T \odd(G-E(T)))/2$, where $\odd(G-E(T))$ is the number of
components of the cotree $G-E(T)$ with an odd number of edges.
It is not difficult to see that every cotree component with an
even number of edges can be partitioned into pairs of adjacent
edges, and that every cotree component with an odd number of
edges can be partitioned into pairs of adjacent edges and one
unpaired edge. Therefore, any spanning tree $S$ minimising
$\odd(G-E(T))$ maximises the number of pairs in the above
partition of the cotree. The proof strategy of Xuong's theorem
can now be summarised as follows. First, embed $S$ in the
$2$-sphere arbitrarily. Then repeatedly apply
Lemma~\ref{lemma:adding-pairs-single-face} to pairs obtained
from the partition of the components of $G-E(S)$, each time
rising the genus by one. Finally, add the remaining edges.
Lemma~\ref{lemma:edge-addition-technique} guarantees that the
addition cannot lower the genus. The result of this process is
an embedding of $G$ with genus at least $(\beta(G) - \min_T
\odd(G-E(T)))/2$.

The fact that a spanning tree minimising $\odd(G-E(T))$
maximises the number of pairs of adjacent edges in the cotree
suggests a slightly different combinatorial characterisation of
maximum genus. It is due to  Khomenko et al.~\cite{KOK} and in
fact is older than Xuong's theorem itself.

\begin{theorem}\label{thm:KOK}
The maximum genus of a connected graph equals the maximum
number of disjoint pairs of adjacent edges whose removal leaves
a connected graph.
\end{theorem}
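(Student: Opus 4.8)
The plan is to establish the two inequalities $\gamma_M(G)\ge\xi(G)$ and $\gamma_M(G)\le\xi(G)$ separately, where $\xi(G)$ denotes the maximum number of disjoint pairs of adjacent edges of $G$ whose removal leaves a connected spanning subgraph.

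For $\gamma_M(G)\ge\xi(G)$ I would argue constructively. Fix a family $P$ of $\xi(G)$ pairwise disjoint pairs of adjacent edges such that $G-\bigcup P$ is connected, and choose a spanning tree $T$ of $G-\bigcup P$; since $T$ spans $G$, it is a spanning tree of $G$ as well. Embed $T$ in the sphere, where it bounds a single face, and then add the pairs of $P$ one at a time, applying Lemma~\ref{lemma:adding-pairs-single-face} at each step to keep the embedding one-faced. The graph $T\cup\bigcup P$ has cycle rank $2\xi(G)$, so by Euler's formula the resulting single-face embedding has genus exactly $\xi(G)$. Finally I would insert the edges of $E(G)\setminus(E(T)\cup\bigcup P)$ one by one; by Lemma~\ref{lemma:edge-addition-technique} none of these insertions lowers the genus, so $G$ embeds in a surface of genus at least $\xi(G)$.

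For $\gamma_M(G)\le\xi(G)$ I would use Xuong's theorem to pick a spanning tree $T$ with $\odd(G-E(T))=\beta(G)-2\gamma_M(G)$ and then apply the partition of the cotree mentioned above: each component $C_i$ of $G-E(T)$ with an even number of edges splits into pairs of adjacent edges, and each odd component splits into such pairs together with a single leftover edge. Summing $\lfloor |E(C_i)|/2\rfloor$ over the cotree components gives $(\beta(G)-\odd(G-E(T)))/2=\gamma_M(G)$ pairwise disjoint pairs of adjacent edges, all lying in the cotree. Deleting them leaves a spanning subgraph that still contains $T$, hence is connected, so $\xi(G)\ge\gamma_M(G)$. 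Together the two inequalities yield $\gamma_M(G)=\xi(G)$.

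I do not expect a genuine obstacle. The only ingredient that is not completely immediate is the decomposition of a connected graph with $k$ edges into $\lfloor k/2\rfloor$ paths of length two plus a leftover edge when $k$ is odd, but this is exactly the fact already recorded in the discussion preceding the statement, so it can simply be quoted. Everything else --- the cycle-rank computation, the Euler-formula genus count for a one-faced embedding, and the monotonicity supplied by Lemmas~\ref{lemma:edge-addition-technique} and~\ref{lemma:adding-pairs-single-face} --- is routine bookkeeping.
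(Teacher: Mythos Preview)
Your argument is correct, and it is precisely the route the paper itself sketches: the paper does not give a self-contained proof of Theorem~\ref{thm:KOK} (it is attributed to Khomenko et~al.), but the surrounding discussion lays out exactly the two ingredients you use --- the cotree decomposition into $\lfloor |E(C_i)|/2\rfloor$ adjacent pairs for the direction $\xi(G)\ge\gamma_M(G)$ via Xuong's theorem, and the single-face build-up from a spanning tree via Lemma~\ref{lemma:adding-pairs-single-face} followed by Lemma~\ref{lemma:edge-addition-technique} for $\gamma_M(G)\ge\xi(G)$. There is nothing to add.
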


The following useful lemma, found for example in
\cite{CKK}, is an extension of
Lemma~\ref{lemma:adding-pairs-single-face} to embeddings with
more than one face. It can either be proved directly by using
Ringeisen's edge-adding technique or can be derived from
Xuongs's theorem. We may note in passing that this lemma was
used in \cite{CKK} to devise an algorithm that constructs an
embedding of genus $\gamma_M(G)-1$ whenever such an embedding
exists.

\begin{lemma}\label{lemma:adding-pairs}
Let $G$ be a connected graph and $\{e,f\}$ a pair of adjacent
edges not contained in $G$, but incident with vertices in $G$.
Then $\gamma_M(G\cup\{e,f\})\ge \gamma_M(G) + 1$.
\end{lemma}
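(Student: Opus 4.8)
The plan is to reduce the statement to Lemma~\ref{lemma:adding-pairs-single-face} by working on a suitable spanning subgraph of $G$ rather than on all of $G$. Let $\Pi$ be a maximum-genus embedding of $G$, so that $\Pi$ has genus $\gamma_M(G)$. I would first select, inside $\Pi$, a spanning subgraph $H\subseteq G$ that is embedded with a single face: concretely, take a spanning tree $T$ of $G$ and then greedily add cotree edges one at a time, keeping the number of faces at its current value is not the right monotone quantity, so instead I would invoke Xuong's theorem in the following form. By Theorem~\ref{thm:KOK} applied to $G$, there is a set $P$ of $\gamma_M(G)$ disjoint pairs of adjacent edges whose removal leaves a connected spanning subgraph; equivalently $G$ contains a spanning subgraph $H$ with $\beta(H)=2\gamma_M(G)$ that admits a single-face embedding (embed the connected leftover, which is a spanning tree together with possibly some extra edges, in the sphere and re-insert the pairs of $P$ via Lemma~\ref{lemma:adding-pairs-single-face}). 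Thus $H$ is a connected spanning subgraph of $G$ with a one-face embedding and $\beta(H)=2\gamma_M(G)$.

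Now consider the pair $\{e,f\}$ of adjacent edges to be added to $G$. Since $H$ is spanning, the vertices incident with $e$ and $f$ lie in $H$, so $\{e,f\}$ is a pair of adjacent edges incident with vertices of $H$ but not contained in $H$. Apply Lemma~\ref{lemma:adding-pairs-single-face} to $H$ and $\{e,f\}$: the graph $H\cup\{e,f\}$ has a single-face embedding, hence its genus equals $\beta(H\cup\{e,f\})/2 = (\beta(H)+2)/2 = \gamma_M(G)+1$. Therefore $\gamma_M(H\cup\{e,f\})\ge \gamma_M(G)+1$.

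Finally, $H\cup\{e,f\}$ is a connected spanning subgraph of $G\cup\{e,f\}$, and the remaining edges of $G\cup\{e,f\}$ — namely those of $E(G)\setminus E(H)$ — can be inserted one by one into the single-face embedding of $H\cup\{e,f\}$. By Lemma~\ref{lemma:edge-addition-technique}, each such insertion either leaves the genus unchanged or raises it by one, so it never decreases the genus. Hence the resulting embedding of $G\cup\{e,f\}$ has genus at least $\gamma_M(G)+1$, which gives $\gamma_M(G\cup\{e,f\})\ge\gamma_M(G)+1$ as claimed. The one point requiring care — and the only real obstacle — is the first step: producing the one-face spanning subgraph $H$ with $\beta(H)=2\gamma_M(G)$. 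If one is willing to take Xuong's theorem as given (as the excerpt permits), this is immediate from the pair-removal formulation in Theorem~\ref{thm:KOK}; otherwise one would prove it directly by the Xuong-style argument sketched after Theorem~\ref{thm:KOK}, embedding a spanning tree in the sphere and adding $\gamma_M(G)$ adjacent pairs via Lemma~\ref{lemma:adding-pairs-single-face}. $\qed$
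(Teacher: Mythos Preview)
Your argument is correct and follows the second of the two routes the paper indicates (``can be derived from Xuong's theorem''): build a one-face spanning subgraph $H\subseteq G$ with $\beta(H)=2\gamma_M(G)$ via Theorem~\ref{thm:KOK} and Lemma~\ref{lemma:adding-pairs-single-face}, apply Lemma~\ref{lemma:adding-pairs-single-face} once more to $H$ and $\{e,f\}$, and then reinsert the remaining edges of $G$ using Lemma~\ref{lemma:edge-addition-technique}. The paper itself gives no detailed proof, only naming this derivation and the alternative direct argument via Ringeisen's edge-addition technique applied to a maximum-genus embedding of $G$.

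One small point to clean up: where you write ``embed the connected leftover, which is a spanning tree together with possibly some extra edges, in the sphere'', this does not in general give a single-face embedding when $G-P$ is not a tree, so Lemma~\ref{lemma:adding-pairs-single-face} would not apply as stated. The fix is exactly what you already say at the end of the proposal: embed only a spanning tree of $G-P$ in the sphere and then add the $\gamma_M(G)$ pairs of $P$ one by one; the resulting $H$ is the desired one-face spanning subgraph with $\beta(H)=2\gamma_M(G)$.
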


 Our main observation is that
Lemma~\ref{lemma:adding-pairs} can be applied to sets of pairs
of adjacent edges which do not necessarily have the
maximum possible size. Indeed, if we find any $k$ pairs
of adjacent edges $(e_i,f_i)_{i=1}^{k}$ in a graph $G$ such
that $G-\bigcup_{i=1}^{k} \{e_i,f_i\}$ is connected, then by
Lemma~\ref{lemma:adding-pairs} we can assert that the maximum
genus of $G$ is at least $k$. This  suggests that identifying a
large number of pairs of adjacent edges whose removal leaves a
connected subgraph can be utilised to obtain a
simple approximation algorithm for the maximum genus.
Indeed, in the following section we show that choosing
the pairs of adjacent edges arbitrarily yields an
effective approximation of maximum genus.

\section{Algorithm}

In this section we present a greedy algorithm for finding at
least $\gamma_M(G)/2$ pairs of adjacent edges while the rest of
the graph remains connected. The idea is simple: if the removal
of a pair of adjacent edges does not disconnect the graph, then
we remove it. 

To prove that the set output by Greedy-Max-Genus Algorithm
always contains at least $\gamma_M(G)/2$ pairs of adjacent
edges we employ the following lemma, which can be easily proved
either using Xuong's theorem or directly from
Lemma~\ref{lemma:edge-addition-technique}.

\begin{lemma}\label{lemma:edge-removal}
Let $G$ be a connected graph and let $e$ be an arbitrary edge
of $G$ such that $G-e$ is connected. Then
$$\gamma_M(G)-1\le\gamma_M(G-e)\le \gamma_M(G).$$
\end{lemma}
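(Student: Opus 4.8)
The plan is to derive both inequalities from Lemma~\ref{lemma:edge-addition-technique}, applied to the connected graph $G-e$ together with the edge $e$. Note that $G-e$ is connected by hypothesis, so a cellular embedding of $G-e$ exists and $G=(G-e)+e$ is an instance of the edge-addition set-up of that lemma.

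For the upper bound $\gamma_M(G-e)\le\gamma_M(G)$, I would start from a maximum-genus embedding $\Pi'$ of $G-e$ and insert the two ends of $e$ into $\Pi'$ in arbitrary corners. By Lemma~\ref{lemma:edge-addition-technique}, the resulting embedding of $G$ has genus either equal to that of $\Pi'$ (case (i)) or larger by one (case (ii)); in either case it is at least $\gamma_M(G-e)$, whence $\gamma_M(G)\ge\gamma_M(G-e)$.

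For the lower bound $\gamma_M(G-e)\ge\gamma_M(G)-1$ I would argue in the opposite direction: take a maximum-genus embedding $\Pi$ of $G$ and delete $e$ from its rotation system, obtaining an embedding $\Pi'$ of $G-e$ (again using that $G-e$ is connected). The key observation is that $\Pi$ is precisely the embedding produced from $\Pi'$ by re-inserting the two ends of $e$ into the corners they originally occupied; that is, $\Pi$ arises from $\Pi'$ by the edge-addition operation described in Lemma~\ref{lemma:edge-addition-technique}. Consequently the genus of $\Pi$ exceeds the genus of $\Pi'$ by at most one, and since $\gamma_M(G-e)$ is at least the genus of $\Pi'$, we obtain $\gamma_M(G-e)\ge\gamma_M(G)-1$.

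The only point that needs care is this last observation — that removing $e$ from $\Pi$ and then adding it back reproduces $\Pi$ — but this is immediate from the way edge addition acts on rotation systems: the two ``insertion corners'' of Lemma~\ref{lemma:edge-addition-technique} are exactly the corners of $\Pi'$ that $e$ separated in $\Pi$, and whether these two corners lie in one face or in two faces of $\Pi'$ is precisely the dichotomy (i)/(ii). A proof via Xuong's theorem is also possible, but it would require comparing $\min_T\odd(G-E(T))$ over spanning trees of $G$ with the analogous quantity for $G-e$, which is more cumbersome than the direct argument sketched above.
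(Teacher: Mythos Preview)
Your proposal is correct and follows exactly one of the two routes the paper itself points to: the paper does not write out a proof of Lemma~\ref{lemma:edge-removal} but merely remarks that it ``can be easily proved either using Xuong's theorem or directly from Lemma~\ref{lemma:edge-addition-technique}'', and your argument is precisely the latter. Your handling of both inequalities via edge addition/deletion on maximum-genus embeddings, including the observation that deleting $e$ from $\Pi$ and re-inserting it in the vacated corners recovers $\Pi$, is the standard way to make this precise.
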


\vbox{
\hrule
\vspace*{1mm}
Greedy-Max-Genus Algorithm
\hrule
\vspace*{1mm}
 \noindent 
 Input:  Connected graph $G$ \\
 Output:  Set $P$ of paiwise disjoint pairs of adjacent edges  of $G$ such that $G-P$ is \\ 
\hspace*{8mm} a connected spanning   subgraph of $G$\\
\hspace*{4mm} 1:  $H \leftarrow G$, $P \leftarrow \emptyset$\\
\hspace*{4mm} 2:  {\bf repeat }\\
\hspace*{4mm} 3:  \qquad choose adjacent edges $e, f$ from $H$   \\
\hspace*{4mm} 4:  \qquad {\bf if} $H - \{e,f\}$ is connected  \\
\hspace*{4mm} 5:  \qquad \qquad $H \leftarrow H - \{e,f\}$\\
\hspace*{4mm} 6:  \qquad \qquad $P \leftarrow P\cup (e,f)$ \\
\hspace*{4mm} 7:  {\bf until} all pairs of adjacent edges of $H$ have been tested\\
\hspace*{4mm} 8:  return  $P$
\vspace*{1mm}
\hrule
}

The final ingredient for our Greedy-Max-Genus Algorithm is the
following characterisation of graphs with maximum genus $0$.

\begin{theorem}\label{thm:mgZero}
The following statements are equivalent for every
connected graph $G$.
\begin{itemize}
\item[{\rm (i)}] $\gamma_M(G)=0$
\item[{\rm (ii)}] No two cycles of $G$ have a vertex in
    common.
\item[{\rm (iii)}] $G$ contains no pair of adjacent edges
    whose removal leaves a connected graph. 
\end{itemize}
\end{theorem}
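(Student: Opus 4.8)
The plan is to prove the three equivalences by a cycle of implications, say (i) $\Rightarrow$ (iii) $\Rightarrow$ (ii) $\Rightarrow$ (i), exploiting the earlier lemmas to handle the genus-theoretic parts.

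First I would show (i) $\Rightarrow$ (iii): if $G$ contained a pair of adjacent edges $\{e,f\}$ whose removal leaves a connected graph $G-\{e,f\}$, then applying Lemma~\ref{lemma:adding-pairs} to $G-\{e,f\}$ and the pair $\{e,f\}$ gives $\gamma_M(G) = \gamma_M((G-\{e,f\})\cup\{e,f\}) \ge \gamma_M(G-\{e,f\}) + 1 \ge 1$, contradicting $\gamma_M(G)=0$. This direction is essentially immediate from the stated lemma.

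Next I would prove the contrapositive of (iii) $\Rightarrow$ (ii): suppose two distinct cycles $C_1$ and $C_2$ share a vertex $v$. I want to produce a pair of adjacent edges whose removal keeps $G$ connected. The cleanest approach is to pass to a subgraph: the union $C_1 \cup C_2$ is a connected graph with cycle rank at least $2$, so it contains a vertex of degree at least $3$; more usefully, in $C_1\cup C_2$ one can find two adjacent edges whose removal leaves it connected (for instance, take a vertex $w$ of degree $\ge 3$ lying on a cycle, and remove two edges at $w$ that both lie on a common cycle through $w$ — the remaining graph is still connected because each removed edge lay on a cycle, so removing it cannot disconnect, and one checks the two removals together still leave a path between the affected endpoints). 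Then, since removing these two edges from all of $G$ disconnects $G$ only if it disconnects within some block, and these edges lie in a single block of cycle rank $\ge 2$, connectedness of $G$ is preserved. I expect the careful argument that a specific pair of adjacent edges on two overlapping cycles can be removed without disconnecting is the main obstacle — it requires arguing about which edges lie on common cycles, and handling the case where $C_1$ and $C_2$ overlap in more than just $v$ (they may share a path).

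Finally, for (ii) $\Rightarrow$ (i): if no two cycles of $G$ share a vertex, then $G$ is a graph in which every block is either a single edge or a single cycle (a ``cactus''). For such a graph, take a spanning tree $T$; each cotree edge closes up exactly one cycle, and distinct cotree edges close up vertex-disjoint cycles, so each component of the cotree $G-E(T)$ is a single edge. Hence $\odd(G-E(T))$ equals the number of cotree edges, which is $\beta(G)$, and by Xuong's theorem $\gamma_M(G) = (\beta(G) - \beta(G))/2 = 0$. Alternatively one argues directly: embed $T$ in the sphere and add each cotree edge; by Lemma~\ref{lemma:edge-addition-technique}(i) each addition splits a face and keeps genus $0$, since the two ends of the added edge always lie on a common face of the current (genus-$0$) embedding — this last point again uses that the cycles are vertex-disjoint. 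Either route closes the cycle of implications and completes the proof.
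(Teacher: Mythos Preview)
The paper does not actually prove this theorem: it attributes (i)\,$\Leftrightarrow$\,(ii) to Nordhaus et~al.\ and dismisses (ii)\,$\Leftrightarrow$\,(iii) as ``easy to see''. Your proposal is therefore a genuine self-contained argument rather than a reconstruction of anything in the paper, and your implications (i)\,$\Rightarrow$\,(iii) via Lemma~\ref{lemma:adding-pairs} and (ii)\,$\Rightarrow$\,(i) via Xuong's formula are clean and correct.

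The one real gap is in your contrapositive of (iii)\,$\Rightarrow$\,(ii). You suggest taking a vertex $w$ of degree $\ge 3$ in $C_1\cup C_2$ and removing ``two edges at $w$ that both lie on a common cycle through $w$''. That choice is exactly backwards: if $e$ and $f$ are the two edges of a single cycle $C$ at $w$, then after deleting $e$ the cycle $C$ is gone, and $f$ need not lie on any cycle of $G-e$, so $G-\{e,f\}$ can disconnect (indeed $w$ becomes isolated in $C-\{e,f\}$). The repair is to pick the two edges from \emph{different} cycles. If $C_1$ and $C_2$ are edge-disjoint, take $e\in C_1$ and $f\in C_2$ both incident with the shared vertex $v$; then $C_2$ survives in $G-e$, so $f$ still lies on a cycle there. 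If $C_1$ and $C_2$ share an edge, let $Q$ be a maximal common subpath and let $w$ be an endpoint of $Q$; at $w$ there is an edge $e_1\in C_1\setminus C_2$ and an edge $e_2\in C_2\setminus C_1$, and the same argument applies: $e_1$ lies on $C_1$, and $e_2$ lies on the cycle $C_2\subseteq G-e_1$. With this corrected edge choice your cycle of implications closes.
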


The equivalence (i) $\Leftrightarrow$ (ii) in
Theorem~\ref{thm:mgZero} was first proved by Nordhaus
et~al.~in \cite{NRSW}. The equivalence (ii) $\Leftrightarrow$
(iii) is easy to see, nevertheless it is its appropriate
combination with Lemma~\ref{lemma:edge-removal} which yields
the desired performance guarantee for Greedy-Max-Genus
algorithm,  as shown in the following theorem.

\begin{theorem}\label{thm:alg}
For every connected graph $G$, the set of pairs output
by Greedy-Max-Genus Algorithm run on $G$ contains at least
$\gamma_M(G)/2$ pairs of adjacent edges.
\end{theorem}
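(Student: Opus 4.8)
The plan is to track how the maximum genus drops as the algorithm removes pairs, and compare this with how many pairs get removed. Let $G = H_0 \supseteq H_1 \supseteq \cdots \supseteq H_k$ be the sequence of graphs obtained during the run, where $H_{i+1} = H_i - \{e_{i+1}, f_{i+1}\}$ is obtained from $H_i$ by removing the $(i+1)$-st successfully removed pair, and $H_k$ is the final graph on which no further pair can be removed. The output $P$ has $|P| = k$, so the goal is to show $k \ge \gamma_M(G)/2$.

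First I would bound the drop in maximum genus at each step. Removing the pair $\{e_{i+1}, f_{i+1}\}$ amounts to removing two edges one after another, each time keeping the graph connected (removing $e_{i+1}$ from $H_i$ keeps it connected because $H_{i+1} = H_i - \{e_{i+1},f_{i+1}\}$ is connected and adding $f_{i+1}$ back cannot disconnect it). Applying Lemma~\ref{lemma:edge-removal} twice gives $\gamma_M(H_{i+1}) \ge \gamma_M(H_i) - 2$. Summing over $i = 0, \dots, k-1$ yields $\gamma_M(H_k) \ge \gamma_M(G) - 2k$.

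Next I would use the termination condition. When the algorithm stops, $H_k$ contains no pair of adjacent edges whose removal leaves a connected graph — this is exactly condition (iii) of Theorem~\ref{thm:mgZero}, so $\gamma_M(H_k) = 0$. Combining with the inequality above gives $0 = \gamma_M(H_k) \ge \gamma_M(G) - 2k$, hence $k \ge \gamma_M(G)/2$, as desired. One should also note that $P$ is a valid output: the pairs are pairwise disjoint since each $\{e_{i+1}, f_{i+1}\}$ is removed from $H_i$ and hence disjoint from all previously removed pairs, each consists of adjacent edges by the choice in line~3, and $H_k = G - P$ is connected by construction.

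The main subtlety — not really an obstacle, but the point deserving care — is justifying that the termination condition of the algorithm (line~7, "all pairs of adjacent edges of $H$ have been tested") really does guarantee that $H_k$ has no removable pair of adjacent edges, so that Theorem~\ref{thm:mgZero}(iii) applies. This requires observing that once a pair fails the connectivity test in some intermediate graph $H_i$, it continues to fail in every later graph $H_j$ with $j > i$ (removing more edges cannot turn a disconnecting pair into a non-disconnecting one), so testing each pair once against the current graph suffices; equivalently, the final graph $H_k$ has been implicitly checked against all its own pairs of adjacent edges. The rest is the straightforward arithmetic above.
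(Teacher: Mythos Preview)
Your proof is correct and follows essentially the same route as the paper's: track the drop in maximum genus via Lemma~\ref{lemma:edge-removal}, then invoke Theorem~\ref{thm:mgZero} on the terminal graph $H_k$. Your version is in fact slightly more careful than the paper's --- you explicitly verify that the intermediate graph $H_i - e_{i+1}$ stays connected so that Lemma~\ref{lemma:edge-removal} applies twice, and you spell out why the algorithm's termination condition really yields condition~(iii) of Theorem~\ref{thm:mgZero} --- points the paper leaves implicit.
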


\begin{proof}
Assume that the algorithm stops after the removal of $k$
disjoint pairs of adjacent edges from~$G$. For
$i\in\{0,1,\dots,k\}$ let $H_i$ denote the graph obtained from
$G$ by the removal of the the first $i$ pairs of edges.
By Lemma~\ref{lemma:edge-removal}, the removal of a single edge
from a graph can lower its maximum genus by at most one.
Therefore, the removal of two edges can lower the maximum genus
by at most two. It follows that $\gamma_M(H_i) \ge \gamma_M(G)
-2i$ for each $i$; in particular, $\gamma_M(H_k) \ge
\gamma_M(G) -2k$. From Theorem \ref{thm:mgZero} we get that
$\gamma_M(H_k) = 0$. By combining these expressions we get $2k
\ge \gamma_M(G)$, which yields $k\ge \gamma_M(G)/2$, as
desired.
\end{proof}

\begin{remark}
Let $n$ and $m$  denote the number of vertices and edges
of $G$, respectively, and let $k$ be the number of pairs of
adjacent edges produced by Greedy-Max-Genus Algorithm run on
$G$. An embedding of $G$ with genus at least $k$ can be
constructed from the set of pairs of adjacent edges in time
$O(n+k\log n)$,  see the proof of Theorem 4.5 in \cite{chen}
for details.
\end{remark}

Observe that any maximal set of pairs of adjacent edges
of $G$ whose removal from $G$ yields a connected graph can be
output by Greedy-Max-Genus Algorithm run on $G$. Hence, as a
corollary of Theorem~\ref{thm:alg} we obtain the following
2-approximate counterpart of Xuong's theorem.

\begin{theorem}\label{thm:approx}
Let $G$ be a connected graph and let $P$ be any inclusion-wise
maximal set of disjoint pairs of adjacent edges of $G$ whose
removal leaves a connected subgraph. Then
$|P|\ge\gamma_M(G)/2$.
\end{theorem}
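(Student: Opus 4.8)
The plan is to deduce Theorem~\ref{thm:approx} directly from Theorem~\ref{thm:alg} by observing that the Greedy-Max-Genus Algorithm is, in effect, a device for producing an \emph{arbitrary} inclusion-wise maximal set of disjoint pairs of adjacent edges whose removal keeps the graph connected. So the first step is to argue that every such maximal set $P$ arises as a possible output of the algorithm. Given $P = \{(e_1,f_1),\dots,(e_{|P|},f_{|P|})\}$, run the algorithm and, in line~3, always choose the pairs of $P$ first, in any order; each such choice passes the test in line~4 because $G - P$ is connected and removing a subset of the pairs of $P$ leaves a supergraph of $G-P$, hence still connected. After all pairs of $P$ have been removed, the algorithm continues testing the remaining pairs of adjacent edges of $H = G - P$; by maximality of $P$, no such pair can be removed without disconnecting $H$, so no further pair is added, and the algorithm returns exactly $P$.

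The second step is then immediate: by Theorem~\ref{thm:alg}, the set returned by any run of the algorithm on $G$ contains at least $\gamma_M(G)/2$ pairs, and since $P$ is such a set, $|P| \ge \gamma_M(G)/2$. Alternatively — and this is really the same argument unwound — one can bypass the algorithm entirely: let $H = G - P$, note $H$ is connected, and observe that maximality of $P$ means $H$ has no pair of adjacent edges whose removal leaves it connected, so by Theorem~\ref{thm:mgZero} we get $\gamma_M(H) = 0$. Applying Lemma~\ref{lemma:edge-removal} twice per pair removed (once for each edge, each removal keeping the graph connected since intermediate graphs are supergraphs of $H$) gives $\gamma_M(H) \ge \gamma_M(G) - 2|P|$, whence $0 \ge \gamma_M(G) - 2|P|$ and $|P| \ge \gamma_M(G)/2$.

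The only genuinely delicate point — and it is minor — is verifying that the intermediate graphs encountered while removing the pairs of $P$ are connected, which is what licenses the repeated application of Lemma~\ref{lemma:edge-removal} (and what makes the line~4 test succeed in the algorithmic phrasing). This is handled uniformly by the observation that any graph obtained from $G$ by deleting \emph{some} of the pairs in $P$ contains $G-P$ as a spanning subgraph, and a supergraph of a connected spanning subgraph on the same vertex set is connected. No other obstacle arises; the bulk of the work has already been done in Theorems~\ref{thm:mgZero} and~\ref{thm:alg} and Lemma~\ref{lemma:edge-removal}.
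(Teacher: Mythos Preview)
Your proposal is correct and follows essentially the same approach as the paper: the paper derives Theorem~\ref{thm:approx} as an immediate corollary of Theorem~\ref{thm:alg} via the one-line observation that any inclusion-wise maximal set $P$ can arise as the output of Greedy-Max-Genus (exactly your first step), and your alternative ``unwound'' argument is nothing more than the proof of Theorem~\ref{thm:alg} replayed with $H_k = G-P$. Your treatment is in fact more explicit than the paper's on the minor point that intermediate graphs stay connected, but the strategy is identical.
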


The following example shows that the bound of Theorem
\ref{thm:approx} is tight and Greedy-Max-Genus Algorithm can
output the value $\gamma_M(G)/2$ for infinitely many graphs
$G$.

\begin{example}
\label{ex:1} 
Take the star $K_{1,2n}$ where $n$ is an arbitrary
positive integer, replace every edge with a~pair of parallel
edges, and add a loop to every vertex of degree $2$. Denote the
resulting graph by~$G_n$. Using Theorem~\ref{thm:KOK} it is
easy to see that $\gamma_M(G_n)= 2n$. Indeed, take a set
$P$ of $2n$ disjoint pairs of adjacent edges, each
consisting of a loop and one of its adjacent edges. Since the
edges of $G_n$ not in $P$ form a spanning tree, $P$ has maximum
size with respect to the property that $G-P$ is
connected. Thus $\gamma_M(G_n) = 2n$ by Theorem~\ref{thm:KOK}.
On the other hand, consider a set $P'$ of $n$ disjoint
pairs of adjacent edges that include only edges incident with
the central vertex. The removal of these pairs from $G_n$
leaves a spanning tree of $G_n$ with a loop attached to every
pendant vertex, so $P'$ is a maximal set of pairs for which
$G_n-P'$ is connected. Since $|P'|=n=\gamma_M(G_n)/2$, our
example confirms that the bound in Theorems~\ref{thm:alg}
and~\ref{thm:approx} is best possible.
\end{example}

Example \ref{ex:1} also implies that processing vertices  in
the decreasing order with respect to their degrees does not
necessarily  lead to a better performance of the algorithm.

\section{Implementation}

To implement the algorithm it is clearly sufficient to consider
all pairs of edges $\{e,f\}$ with a common end-vertex and test
whether removing the pair does not disconnect the graph.
The running time is thus
$O((\tau+\rho)\sum_{i=1}^n d_i^2)$, where $\tau$ is the time required to
test the connectivity, $\rho$ is the time required to update the
underlying data structure, and $d_i$ is the degree of the
$i$-th vertex.
If the input graph is simple, then $\sum_{i=1}^n d_i^2
= O(m^2/n)$ by \cite{caen:1998}. If  the input graph  is not
simple, we can preprocess it as follows. Let $P=\emptyset$.
From every set $F$ of pairwise parallel edges we repeatedly
remove pairs of edges and add them into $P$ until  $F$
contains at most two parallel edges. Simirarly, from every set
$F$ of loops incident with a single vertex we remove pairs of
loops and add them into $P$ until $F$ contains at most one
loop.
Let $G'$ denote the resulting graph.
 Finally, the set $P$ of pairs of adjacent edges is added
to the set $P'$ produced by Greedy-Max-Genus Algorithm on $G'$.
 It can be easily seen that in $G'$ the sum
of squares of the degrees is again in $O(m^2/n)$ and that the
preprocessing phase can be done in $O(m)$, where $m$ and $n$ is the  number of edges and vertices of the input graph. Regarding the
testing of connectivity, we have $\tau=O(m)$ for instance by using
DFS, in which case there is no need for additional updates of data
structure and thus $\rho=O(m)$. Therefore, we obtain an
implementation which reduces essentially to a series of
connectivity tests and has running time $O(m^3/n)$. Using the
dynamic graph algorithm for connectivity from \cite{WN:2013} it
is possible to support updates in $\rho  = O(\log^2 n/\log\log n)$
amortized time and queries in $\tau = O(\log n/\log\log n)$
worst-case time. This yields the total running time
$O(m^2\log^2n/(n\log\log n))$.

\section{Discussion}
We have presented an approximation algorithm for the maximum
genus problem that for any connected graph $G$ outputs an
integer $k$ such that $\gamma_M(G)/2 \le k \le \gamma_M(G)$.
This result shows that the classical ideas dating back to
Norhaus et al.~\cite{NRSW} and Ringeisen \cite{ringeisen:1972},
and to characterisations of maximum genus by Xuong \cite{X} and
Khomenko et al.~\cite{KOK},  can be used also for efficient
approximation of maximum genus. Our algorithm is much simpler
than both the $4$-approximation algorithm of Chen
\cite{chen} and the precise polynomial-time algorithms for the
maximum genus problem of Furst et al. \cite{FGM} and
Glukhov~\cite{G}, outperforms the existing
$4$-approximation algorithm \cite{chen}, and provides the first
approximation of maximum genus that can be easily implemented.
On the structural side, we have obtained a natural
$2$-approimation counterpart of Xuong's theorem.

\section*{Acknowledgement}
The first author was partially supported by Ministry of
Education, Youth, and Sport of Czech Republic, Project
No.~CZ.1.07/\-2.3.00/\-30.0009. The second author was
partially supported by VEGA grant 1/0474/15. The authors would
like to thank Rastislav Kr\'alovi\v{c} and Jana
Vi\v{s}\v{n}ovsk\'a for reading preliminary versions of this
paper and making useful suggestions.

\scriptsize
\bibliographystyle{plain}
\bibliography{bibl}
\end{document}